\newtheorem{theorem}{Theorem}[section]
\newtheorem{lemma}[theorem]{Lemma}
\theoremstyle{definition}
\newtheorem{definition}[theorem]{Definition}
\renewcommand{\ge}{\geqslant}
\renewcommand{\dashrightarrow}
{\text{\raisebox{0.9mm} {\
\begin{tikzpicture}[->,thick,xscale=0.56]
  \draw[dashed] (0,0)--(1,0)
;
\end{tikzpicture}}\ }}
\newcommand{\dash}{\xymatrix{\ar@{-->}[r]&}}
\newcommand{\full}{\xymatrix{\ar@{->}[r]&}}
\newcommand{\ci}{
\begin{picture}(6,6)
\put(3,3){\circle*{3}}
\end{picture}}
\newcommand{\lin}{\,\frac{}{\quad}\,}
\begin{document}

\title{Tame systems of linear and semilinear mappings}

\author{\emph{Tatiana Klimchuk, Dmitry Kovalenko}\\
Kiev National Taras Shevchenko
University, Kiev, Ukraine\\
klimchuk.tanya@gmail.com\quad kovalenko.d.y@gmail.com
 \and
\emph{Tetiana Rybalkina, Vladimir V. Sergeichuk}\\
Institute of Mathematics,
Tereshchenkivska 3, Kiev, Ukraine\\
rybalkina\_t@ukr.net\quad sergeich@imath.kiev.ua}

\date{}

\maketitle

\begin{abstract}
We study systems of linear and
semilinear mappings considering them as
representations of a directed graph $G$
with full and dashed arrows: a
representation of $G$ is given by
assigning to each vertex a complex
vector space, to each full arrow a
linear mapping, and to each dashed
arrow a semilinear mapping of the
corresponding vector spaces. We extend
to such representations the classical
theorems by Gabriel about quivers of
finite type and by
Nazarova, Donovan, and Freislich about
quivers of tame types.
\medskip

Keywords: Linear and semilinear mappings,
quivers of finite and tame types,
classification
\medskip

AMS classification: 15A04, 15A21,
16G60
\end{abstract}

\section{Introduction}

We study systems of linear and
semilinear mappings on complex vector spaces.
A mapping ${\mathcal A}$ from a
complex vector space $U$ to a complex vector space $V$
is called \emph{semilinear} if
\begin{equation*}\label{lkhe}
{\mathcal A}(u+u')={\mathcal A}u+{\mathcal A}u',\qquad
{\mathcal A}(\alpha u)=\bar\alpha {\mathcal A}u
\end{equation*}
for all $u,u'\in U$ and $\alpha
\in\mathbb C$. We write ${\mathcal A}: U\to
V$ if ${\mathcal A}$ is a linear mapping
and ${\mathcal A}: U\dashrightarrow V$
(using a dashed arrow) if ${\mathcal A}$ is
a semilinear mapping.

We study systems of
linear and semilinear mappings
considering them as representations of
biquivers introduced by Sergeichuk \cite[Section
5]{ser_surv} (see also \cite{debora}); they generalize the
notion of representations of  quivers
introduced by Gabriel \cite{gab}.

\begin{definition}\label{def1}
\begin{itemize}
  \item A \emph{biquiver} is a
      directed graph $G$ with
      vertices $1,2,\dots,t$ and
      with full and dashed arrows;
      for
example,
\begin{equation}\label{ksyq}
\begin{split}
\begin{tikzpicture}[->,thick,auto,
node distance=2 cm,xscale=2,yscale=2]
  \node (1) {$1$};
  \node(2) [below left of=1] {$2$};
  \node (3) [below right of=1] {$3$};
  \path
    (1)
    (2) edge [dashed] node [left] {${\alpha}$} (1)
        edge node {${\delta}$} (3)
        edge [bend right,dashed]
        node[above]%
        {${\varepsilon}$} (3)
        edge [dashed,loop left] node
          {${\gamma}$} (2)
    (3) edge [loop right] node
        {${\zeta}$} (3)
        edge node [right]
          {${\beta }$} (1)
    ;
\end{tikzpicture}
\end{split}
\end{equation}
  \item A \emph{representation}
      $\mathcal A$ of a biquiver
      $G$ is given by assigning to
      each vertex $v$ a complex
      vector space $\mathcal A_v$,
      to each full arrow $\alpha:
      u\longrightarrow v$ a linear
      mapping $\mathcal A_{\alpha
      }:\mathcal A_u\to\mathcal
      A_v$, and to each dashed
      arrow $\alpha:
      u\dashrightarrow v$ a
      semilinear mapping $\mathcal
      A_{\alpha }:\mathcal
      A_u\dashrightarrow\mathcal
      A_v$. The vector \[\dim
      \mathcal A:=(\dim \mathcal
      A_1,\dots,\dim \mathcal A_t)\]
      is called the
      \emph{dimension} of a
      representation $\mathcal A$.
For example, a representation
\begin{equation*}\label{2bf}
\begin{split}
\begin{tikzpicture}[->,thick,auto,
node distance=2.5cm,xscale=2,yscale=2]
 \node (1) {${\mathcal A}_1$};
  \node(2)[below left of=1]
           {${\mathcal A}_2$};
  \node (3) [below
  right of=1] {${\mathcal A}_3$};
 \node (4)[left of=1] {${\mathcal A}:$
\qquad\qquad\qquad\qquad\qquad\qquad};
       %%%%%
\path (1) (2) edge [dashed] node
    [left] {${\mathcal A}_{\alpha}$}
    (1)
        edge node {${\mathcal A}_{\delta}$} (3)
        edge [bend right,dashed]
        node[above]%
        {${\mathcal A}_{\varepsilon}$} (3)
        edge [dashed,loop left] node
          {${\mathcal A}_{\gamma}$} (2)
    (3) edge [loop right] node
        {${\mathcal A}_{\zeta}$} (3)
        edge node [right]
          {${\mathcal A}_{\beta }$} (1)
    ;
\end{tikzpicture}
\end{split}
\end{equation*}
of \eqref{ksyq} is formed by
complex spaces ${\mathcal A}_1,{\mathcal
A}_2,{\mathcal A}_3$, linear mappings
${\mathcal A}_{\beta}$, ${\mathcal
A}_{\delta}$, ${\mathcal A}_{\zeta}$,
and semilinear mappings ${\mathcal
A}_{\alpha }$, ${\mathcal A}_{\gamma
}$, ${\mathcal A}_{\varepsilon }$.

  \item A \emph{morphism} $\mathcal
      F: {\mathcal A}\to{\mathcal B}$
      between representations $\mathcal
      A$ and ${\mathcal B}$ of a
      biquiver $G$ is a family  of
      linear mappings ${\mathcal F}_1:
      {\mathcal A}_1\to {\mathcal
      B}_1,\dots, {\mathcal F}_t: {\mathcal
      A}_t\to {\mathcal B}_t$ such that
for each arrow $\alpha$ from $u$ to $v$
the
diagram\\[-5mm]
\begin{equation}\label{1.2aa}
\begin{split}
\xymatrix{
 {\mathcal A_u}
\ar@{->}[r]^{\mathcal A_{\alpha }}
\ar@{->}[d]_{\mathcal F_{u}}
& {\mathcal A_v}
\ar@{->}[d]^{\mathcal F_{v}}
\\
{\mathcal B_u}
\ar@{->}[r]^{\mathcal B_{\alpha }}
& {\mathcal B_v}
 } \ \ \begin{matrix}
 \\[4mm]
 \text{if $\xymatrix{u
\ar@{->}[r]^{\alpha \ }&v}$\quad or} \\
       \end{matrix}
 \quad
\xymatrix{
 {\mathcal A_u}
\ar@{-->}[r]^{\mathcal A_{\alpha }}
\ar@{->}[d]_{\mathcal F_{u}}
& {\mathcal A_v}
\ar@{->}[d]^{\mathcal F_{v}}
\\
{\mathcal B_u}
\ar@{-->}[r]^{\mathcal B_{\alpha }}
& {\mathcal B_v}
 } \ \ \begin{matrix}
 \\[4mm]
 \text{if $\xymatrix{u
\ar@{-->}[r]^{\alpha \ }&v}$} \\
       \end{matrix}
\end{split}
\end{equation}
is commutative (i.e., $ {\mathcal
B}_{\alpha}{\mathcal F}_u={\mathcal
F}_v{\mathcal A}_{\alpha}$). We write $\mathcal A\simeq \mathcal B$ if $\mathcal A$ and  $ \mathcal B$ are isomorphic; i.e., if all ${\mathcal F}_i$ are bijections.
\end{itemize}
\end{definition}

For example, each cycle of linear
and semilinear mappings
\begin{equation*}\label{jsttw}
\begin{split}
{\mathcal A}: \qquad\xymatrix{
{V_1}
\ar@{-}@/^2pc/[rrrr]^{\mathcal A_t}
\ar@{-}[r]^{\ \ \mathcal A_1}&
V_2\ar@{-}[r]^{\mathcal A_2} &
{\ \dots\ }&{V_{t-1}}
\ar@{-}[l]_{\mathcal A_{t-2}}
\ar@{-}[r]^{\ \mathcal A_{t-1}\ \ }&{V_t}}
\end{split}
\end{equation*}
(in which each edge is a full or dashed
arrow $\longrightarrow$, $\longleftarrow$,
$\xymatrix{\ar@{-->}[r]&}$, or $\xymatrix{\ar@{<--}[r]&}$) is a representation of the biquiver
\begin{equation*}\label{aadw}
\begin{split}
C: \qquad\xymatrix{
{1}
\ar@{-}@/^2pc/[rrrr]^{\alpha _t}
\ar@{-}[r]^{\quad\alpha_1}&
2\ar@{-}[r]^{\alpha_2} &
{\ \dots\ }&{(t-1)}\ar@{-}[l]_{\alpha_{t-2}\ }
\ar@{-}[r]^{\alpha _{t-1}}&{t}\quad}
\end{split}
\end{equation*}
its representations were classified in \cite{debora1}.

Note that a biquiver without dashed
arrows is a quiver and its
representations are the quiver
representations. The quivers, for which
the problem of classifying their
representations does not contain the
problem of classifying pairs of
matrices up to similarity (i.e., the problem of classifying
representations of the quiver
 $\lefttorightarrow  \!\!1\!\!\righttoleftarrow$),
are called \emph{tame}
(this definition is informal; formal
definitions are given in \cite[Section
14.10]{gab_roi}). The list of all
tame quivers and the classification
of
their representations were obtained
independently by Donovan and Freislich
\cite{don1} and Nazarova \cite{naz}. We
extend their results to representations
of biquivers.

\section{Formulation of the main results} \label{kdtf}

The {\it direct sum} of representations
    ${\mathcal A}$ and ${\mathcal B}$ of a biquiver is
    the representation ${\mathcal
    A}\oplus{\mathcal B}$ formed by the
    spaces ${\mathcal A}_v\oplus {\mathcal
    B}_v$ and the mappings ${\mathcal
    A}_{\alpha}\oplus{\mathcal
    B}_{\alpha}$. A representation
    of nonzero dimension is {\it
    indecomposable} if it is not
    isomorphic to a direct sum of
    representations of nonzero
    dimensions.

By analogy with quiver representations,
we say that a biquiver is {\it
representation-finite} if it has only
finitely many nonisomorphic
indecomposable representations. A
biquiver is {\it wild} if the problem
of classifying its representations
contains the problem of classifying
matrix pairs up to similarity
transformations
\begin{equation*}\label{gpw}
(A,B)\mapsto (S^{-1}AS,S^{-1}AS),\qquad S\text{ is nonsingular},
\end{equation*}
otherwise the biquiver is {\it tame}. Clearly, each
representation-finite biquiver is tame.
The problem of classifying matrix pairs
up to similarity is the
problem of classifying representations
of the quiver
 $\lefttorightarrow
\!\!1\!\!\righttoleftarrow$; it
contains the problem of classifying
representations of each quiver (see
\cite{bel-ser_comp}) and so it is considered as
hopeless. An analogous statement for
representations of biquivers was proved
in \cite{debora}: the problem of
classifying representations of the
biquiver\!\!\!\!
\raisebox{-2.1mm}{\begin{tikzpicture}[->,thick,auto,node
distance=1cm,xscale=1.2,yscale=1.7]
  \node (1) {$1$};
            %%%%%%%%%%%
\path
    (1)
edge [dashed,loop left] node
          {} (1)
edge [dashed,loop right] node
          {} (1)
          ;
\end{tikzpicture}}\!\!\!\!
contains the problem of classifying
representations of each biquiver.

The {\it Tits form} of a biquiver $G$
with vertices $1,\dots,t$ is the
integral quadratic form
\begin{equation*}\label{kur}
q_G(x_1,\dots,x_t):=x_1^2+\dots+x_t^2-
\sum x_ux_v,
\end{equation*}
in which the sum $\sum$ is taken over
all arrows $u\longrightarrow v$ and
$u\dashrightarrow v$ of the biquiver.

The following theorem extends Gabriel's
theorem {\cite{gab}} (see
also \cite[Theorem 2.6.1]{haz-kir}) to each biquiver $G$
and coincides with it if
$G$ is a quiver.

\begin{theorem}[proved in Section \ref{oir}]
\label{te1} Let
$G$ be a connected biquiver with
vertices $1,2,\dots,t$.
\begin{itemize}
  \item[\rm(a)] $G$ is
      representation-finite if and
      only if $G$ can be obtained from
      one of
      the Dynkin diagrams
\begin{equation}\label{dfe}
\begin{aligned}
&A_t\ \xymatrix@=10pt@R=0,5pt{
&&&&\\
*{\ci}\ar@{-}[r]&*{\ci}\ar@{-}[r]&
*{{\ci}\
\cdots\ {\ci}}
\ar@{-}[r]&*{\ci}\ar@{-}[r] &*{\ci}}
\qquad
   %%%%%%%%%
&&
   %%%%%%%%%
D_t\ \xymatrix@=10pt@R=0,5pt{&&&&*{\ci}\\
*{
\ \ci}\ar@{-}[r]&*{\ci}\ar@{-}[r]&
*{{\ci}\
\cdots\ {\ci}}
\ar@{-}[r]&*{\ci}\ar@{-}[ur]
 \ar@{-}[dr]&\\ &&&&*{\ci}}
   %%%%%%%%%
          \\[-5pt]
   %%%%%%%%%
&
\begin{matrix}
 \\
E_6
\end{matrix}
\ \xymatrix@=10pt{
&&*{\ci}\ar@{-}[d]&&\\
*{\ci}\ar@{-}[r]&*{\ci}\ar@{-}[r]&
*{\ci}
\ar@{-}[r]&*{\ci}\ar@{-}[r] &*{\ci}}
   %%%%%%%%%
&&
   %%%%%%%%%
\begin{matrix}
 \\
E_7
\end{matrix}
\ \xymatrix@=10pt{
&&*{\ci}\ar@{-}[d]&&\\
*{\ci}\ar@{-}[r]&*{\ci}\ar@{-}[r]&
*{\ci}
\ar@{-}[r]&*{\ci}\ar@{-}[r] &*{\ci}\ar@{-}[r] &*{\ci}}
   %%%%%%%%%
          \\
   %%%%%%%%%
&\begin{matrix}
 \\
E_8
\end{matrix}
\ \xymatrix@=10pt{
&&*{\ci}\ar@{-}[d]&&\\
*{\ci}\ar@{-}[r]&*{\ci}\ar@{-}[r]&
*{\ci}
\ar@{-}[r]&*{\ci}\ar@{-}[r] &*{\ci}\ar@{-}[r] &*{\ci}\ar@{-}[r] &*{\ci}}
\end{aligned}
\end{equation}
by replacing each edge with a full or   dashed arrow,
if and only if the Tits form $q_G$
      is positive definite.

  \item[\rm(b)] Let $G$ be
      representation-finite and let
      $ z=(z_1,\dots,z_t)$ be an
      integer vector with
      nonnegative components. There
      exists an indecomposable
      representation of dimension
      $z$ if and only if $q_G({
      z})=1$; this representation
      is determined by $z$ uniquely
      up to isomorphism.

\end{itemize}
\end{theorem}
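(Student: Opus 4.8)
\emph{Overall plan.} I would reduce both parts of the theorem to Gabriel's theorem by showing that, up to an explicit equivalence of categories, every biquiver whose underlying graph is a tree \emph{is} an ordinary quiver. The device is complex conjugation of vector spaces. For a complex space $V$ let $\bar V$ be its conjugate space (the same abelian group with the new scalar action $\lambda\cdot v:=\bar\lambda v$); then $\bar{\bar V}=V$, a linear map $U\to V$ is still linear as a map $\bar U\to\bar V$, while a semilinear map $U\dashrightarrow V$ becomes a linear map $U\to\bar V$ (and $\bar U\to V$), and conversely. Given a biquiver $G$ on vertices $1,\dots,t$ and a function $\sigma\colon\{1,\dots,t\}\to\{0,1\}$, let $G^\sigma$ be the biquiver with the same vertices and arrows in which the type of an arrow $u\to v$ is switched (full $\leftrightarrow$ dashed) exactly when $\sigma(u)\neq\sigma(v)$. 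Replacing $\mathcal A_v$ by $\bar{\mathcal A_v}$ at every vertex with $\sigma(v)=1$, and keeping each $\mathcal F_v$ as the same map of sets, defines a functor $\Phi^\sigma\colon\operatorname{rep}(G)\to\operatorname{rep}(G^\sigma)$; going through the eight cases of the previous sentence shows it is well defined, and $\Phi^\sigma$ for $G^\sigma$ with the same $\sigma$ is its inverse, so $\Phi^\sigma$ is an equivalence. Since $\dim_{\mathbb C}\bar{\mathcal A_v}=\dim_{\mathbb C}\mathcal A_v$ and conjugation commutes with direct sums, $\Phi^\sigma$ preserves dimension vectors and takes indecomposables to indecomposables; moreover $q_G=q_{G^\sigma}$, the Tits form depending only on the underlying graph.

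\emph{Trees become quivers.} If the underlying graph $\Gamma$ of $G$ is a tree, fix a root and let $\sigma(v)$ be the parity of the number of dashed arrows on the unique path from the root to $v$; then $G^\sigma$ has only full arrows, i.e.\ it is a quiver with underlying graph $\Gamma$. Hence $\operatorname{rep}(G)\simeq\operatorname{rep}(G^\sigma)$ by a dimension-preserving equivalence, so both assertions of the theorem for such $G$ follow at once from Gabriel's theorem for $G^\sigma$. As $A_t,D_t,E_6,E_7,E_8$ are trees, this already proves part (b) and the ``if'' directions of part (a): a biquiver obtained from a Dynkin diagram is representation-finite and has a unique indecomposable of dimension $z$ precisely when $q_G(z)=1$. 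The remaining equivalence in (a), ``$q_G$ positive definite $\iff\Gamma$ is a Dynkin diagram'', is the classical graph-theoretic statement and is insensitive to the choice of arrow types and orientations.

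\emph{Non-Dynkin biquivers are of infinite type.} Suppose $\Gamma$ is connected and not a Dynkin diagram. Then $\Gamma$ contains a subgraph $\Delta$ that is either an extended Dynkin diagram $\tilde D_n,\tilde E_6,\tilde E_7,\tilde E_8$ (these are trees) or a cycle $\tilde A_n$ (a loop and a multiple edge being the cases $n=0,1$). Let $H\subseteq G$ be a sub-biquiver whose underlying graph is $\Delta$; extending a representation of $H$ by zero spaces at the other vertices embeds $\operatorname{rep}(H)$ into $\operatorname{rep}(G)$ preserving indecomposability and non-isomorphism, so it suffices to show $H$ has infinitely many indecomposables. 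If $\Delta$ is a tree, this follows from the previous paragraph, since $\operatorname{rep}(H)\simeq\operatorname{rep}(Q)$ for a quiver $Q$ of extended Dynkin type, which is of infinite representation type. If $\Delta$ is a cycle (or a loop), then $H$ is a cycle of linear and semilinear maps, whose representations are classified in \cite{debora1}, and that classification contains infinitely many indecomposable ones (already a single linear or semilinear operator on one vertex space yields infinitely many). In every case $G$ is not representation-finite, which finishes (a).

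\emph{Main obstacle.} The only subtle point is setting up the conjugation functor $\Phi^\sigma$ with care — checking the arrow-type switching rule in all eight cases, and that morphisms together with their commuting squares transport correctly — since after that trees literally become quivers and the statement is Gabriel's theorem; the sole external input is the fact that a cycle of linear and semilinear mappings has infinitely many indecomposable representations, which we take from \cite{debora1}.
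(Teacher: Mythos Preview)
Your proof is correct and follows essentially the same strategy as the paper: both reduce the tree case to Gabriel's theorem via a conjugation operation, your functor $\Phi^\sigma$ being exactly the paper's vertex-by-vertex conjugation (Lemma~\ref{kow2}) performed at all vertices with $\sigma(v)=1$ simultaneously and phrased categorically via conjugate spaces rather than through matrix representations. For the non-tree case the paper is slightly more self-contained---it directly builds a one-parameter family $P(M)$ of representations supported on a cycle, using similarity or consimilarity of $M$, instead of your subgraph reduction and appeal to \cite{debora1}---but the underlying idea is the same.
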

Representations of
representation-finite quivers were
classified by Gabriel \cite{gab}; see
also \cite[Theorem 2.6.1]{haz-kir}.

The following theorem extends the
Donovan--Freislich--Nazarova theorem
\cite{don1,naz} (see also \cite[Chapter 2]{haz-kir}) to each biquiver $G$ and
coincides with it if $G$ is a quiver.

\begin{theorem}[proved in Section \ref{kur1}]
\label{te2} Let $G$
be a connected biquiver with
vertices $1,2,\dots,t$.
 \begin{itemize}
 \item[\rm(a)] $G$ is tame if and
     only if $G$ can be obtained from one of the
     Dynkin diagrams \eqref{dfe} or from one of the extended Dynkin diagrams
\begin{equation*}\label{dfse}
\begin{aligned}
&\tilde A_{t-1}\ \xymatrix@=10pt{
&&{\ci}\ar@{-}[lld]\ar@{-}[rrd]&&\\
*{\ci}\ar@{-}[r]&*{\ci}\ar@{-}[r]&
*{{\ci}\
\cdots\ {\ci}}
\ar@{-}[r]&*{\ci}\ar@{-}[r] &*{\ci}}
\qquad
   %%%%%%%%%
&&
   %%%%%%%%%
\tilde D_{t-1}\ \xymatrix@=10pt@R=0,5pt{*{\ci}&&&&*{\ci}\\
&*{\ci}\ar@{-}[r]\ar@{-}[lu]\ar@{-}[ld]&
*{{\ci}\
\cdots\ {\ci}}
\ar@{-}[r]&*{\ci}\ar@{-}[ur]
 \ar@{-}[dr]&\\ *{\ci}&&&&*{\ci}}
   %%%%%%%%%
          \\[-5pt]
   %%%%%%%%%
&
\begin{matrix}
 \\
\tilde E_6
\end{matrix}
\ \xymatrix@=10pt{
&&*{\ci}\ar@{-}[d]&&\\
&&*{\ci}\ar@{-}[d]&&\\
*{\ci}\ar@{-}[r]&*{\ci}\ar@{-}[r]&
*{\ci}
\ar@{-}[r]&*{\ci}\ar@{-}[r] &*{\ci}}
   %%%%%%%%%
&&
   %%%%%%%%%
\begin{matrix}
 \\
\tilde E_7
\end{matrix}
\ \xymatrix@=10pt{
&&&*{\ci}\ar@{-}[d]&&\\
*{\ci}\ar@{-}[r]&*{\ci}\ar@{-}[r]&*{\ci}\ar@{-}[r]&
*{\ci}
\ar@{-}[r]&*{\ci}\ar@{-}[r] &*{\ci}\ar@{-}[r] &*{\ci}}
   %%%%%%%%%
          \\
   %%%%%%%%%
&\begin{matrix}
 \\
\tilde E_8
\end{matrix}
\ \xymatrix@=10pt{
&&*{\ci}\ar@{-}[d]&&\\
*{\ci}\ar@{-}[r]&*{\ci}\ar@{-}[r]&
*{\ci}
\ar@{-}[r]&*{\ci}\ar@{-}[r] &*{\ci}\ar@{-}[r] &*{\ci}\ar@{-}[r] &*{\ci}\ar@{-}[r] &*{\ci}}
\end{aligned}
\end{equation*}
by replacing each edge with a full or   dashed arrow,
if and only if the Tits form $q_G$
     is positive semidefinite.

  \item[\rm(b)] Let $G$ be tame and
      let ${ z}=(z_1,\dots,z_t)$ be
      an integer vector with
      nonnegative components. There
      exists an indecomposable
      representation of dimension
      $z$ if and only if $q_G({
      z})=0$ or $q_G({z})=1$.
\end{itemize}
\end{theorem}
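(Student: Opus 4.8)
\noindent\emph{Reduction of part (a).} The plan is to first observe that the Tits form $q_G$ depends only on the underlying graph $\abs G$ of $G$ (obtained by forgetting orientations and the full/dashed distinction). For a connected graph it is classical (see e.g.\ \cite[Ch.~2]{haz-kir}) that $q_{\abs G}$ is positive semidefinite but not positive definite precisely when $\abs G$ is one of the extended Dynkin diagrams $\tilde A,\tilde D,\tilde E$. Combined with Theorem~\ref{te1}(a), this establishes the second ``if and only if'' of~(a) and reduces the theorem to proving: (i) if $\abs G$ is an extended Dynkin diagram then $G$ is tame and~(b) holds; (ii) if $\abs G$ is connected but neither Dynkin nor extended Dynkin, then $G$ is wild. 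For $G$ of Dynkin type, tameness is immediate from representation-finiteness and~(b) is Theorem~\ref{te1}(b), so in~(i) only the extended Dynkin diagrams remain.

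\noindent\emph{Turning semilinear maps into linear ones.} The key elementary device is: call a vertex $w$ \emph{odd} if some walk from a fixed base vertex to $w$ passes through an odd number of dashed arrows; this is well defined as soon as every closed walk of $G$ has an even number of dashed arrows, in particular whenever $\abs G$ is a tree. Replacing $\mathcal A_w$ by the complex-conjugate space $\overline{\mathcal A_w}$ at every odd vertex $w$, and replacing each $\mathcal A_\alpha$ and each morphism component by the induced map of the new spaces, converts every semilinear $\mathcal A_\alpha$ into a linear map and keeps linear ones linear, because along each arrow the parities of its endpoints differ exactly when the arrow is dashed. Hence, for such $G$, the category of representations of $G$ is equivalent, dimension-preservingly, to that of the quiver with the same graph and orientation. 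Applying Gabriel's theorem \cite{gab} for $\abs G$ Dynkin and the Donovan--Freislich--Nazarova theorem \cite{don1,naz} for $\abs G\in\{\tilde D,\tilde E_6,\tilde E_7,\tilde E_8\}$ (all trees), and using $q_G=q_{\abs G}$, one obtains tameness and~(b) in these cases, and also the wildness required in~(ii) whenever $\abs G$ is a tree.

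\noindent\emph{The cycle case $\tilde A$.} If $\abs G=\tilde A_{t-1}$ then $G$ is a cycle of $t$ linear and semilinear maps. When the number of dashed arrows around the cycle is even, the conjugation trick again reduces $G$ to a cyclic quiver of type $\tilde A_{t-1}$. When it is odd, I would pass to the connected two-sheeted covering quiver $\widehat G$ (two copies of each vertex; full arrows stay inside a copy, dashed arrows cross between copies), which here is $\tilde A_{2t-1}$ with its antipodal involution $\sigma$; a representation of $G$ is the same thing as a representation $W$ of $\widehat G$ equipped with a semilinear isomorphism $W\to\sigma^{*}W$ squaring to the identity, compatibly with direct sums. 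Since $q_{\widehat G}$ restricted to $\sigma$-invariant dimension vectors equals $2q_G$ under the two-to-one identification, the Donovan--Freislich--Nazarova classification of $\tilde A_{2t-1}$ descends to $G$; alternatively one simply invokes the classification of cycles of linear and semilinear maps in \cite{debora1}. Either way, the delicate point, and the one I expect to be the main obstacle, is the bookkeeping of indecomposables under this descent: a $\sigma$-invariant indecomposable of $\widehat G$ need not descend to $G$ (a real/quaternionic dichotomy), so one must verify that for every $z\ge 0$ with $q_G(z)\in\{0,1\}$ enough indecomposables of $\widehat G$ descend, and that no others appear.

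\noindent\emph{Wildness.} Finally, for~(ii) in general, I would use the classical fact that a connected graph that is neither Dynkin nor extended Dynkin contains, as a full subgraph, one of a finite explicitly known list of minimal such graphs (roughly, the extended Dynkin diagrams enlarged by one edge). Thus $G$ has a full subbiquiver $G'$ with $q_{G'}$ indefinite; extending representations by zero embeds $\operatorname{rep}G'$ in $\operatorname{rep}G$ as a full subcategory closed under direct summands, so $G'$ wild implies $G$ wild. If $\abs{G'}$ is a tree, or all its cycles carry evenly many dashed arrows, $G'$ is equivalent to a wild quiver by the conjugation trick; the finitely many remaining $G'$ are dispatched through the covering $\widehat{G'}$, which contains a wild quiver component since $q_{\widehat{G'}}$ is indefinite, together with a direct matrix reduction realizing an arbitrary pair of matrices and its simultaneous conjugation inside $\operatorname{rep}G'$, in the spirit of \cite{bel-ser_comp,debora}. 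Besides the type-$\tilde A$ bookkeeping above, the remaining work is the verification of wildness uniformly over all orientations and all choices of full and dashed arrows for this finite list of critical biquivers.
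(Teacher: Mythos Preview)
Your overall architecture matches the paper's: reduce trees to ordinary quivers by a conjugation device, handle cycles by citing \cite{debora1}, and show that anything else is wild. Your ``odd vertex / conjugate space'' trick is exactly the paper's Lemma~\ref{kow2} (conjugation at a vertex) in categorical clothing, and for the $\tilde A$ case you eventually fall back on \cite{debora1}, just as the paper does.

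The substantive divergence is in the wildness step. The paper does not use covering quivers or a list of minimal wild graphs. Instead it argues directly: if $G$ is not a tree, it contains a cycle $C$; if $G\ne C$, there is an extra arrow $\alpha$ attached to $C$. After conjugating to make $\alpha_2,\dots,\alpha_r$ full, the paper collapses the problem to one of four explicit biquivers on one or two vertices ($G_1,G_2$ with a dashed loop plus an edge, or $G_3,G_4$ with two loops), and then Lemmas~\ref{kjr} and~\ref{kjrd} give concrete block-matrix constructions embedding the problem of classifying arbitrary pairs $(P,Q)$ up to similarity. This is short and completely explicit.

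Your route via the double cover $\widehat{G'}$ is not wrong in spirit, but as written it has a gap: wildness of a component of $\widehat{G'}$ does \emph{not} automatically descend to $G'$, because the functor from $\operatorname{rep}G'$ to $\sigma$-equivariant objects of $\operatorname{rep}\widehat{G'}$ is not an equivalence onto the whole of $\operatorname{rep}\widehat{G'}$, and the real/quaternionic dichotomy you mention can in principle kill families. You acknowledge that a ``direct matrix reduction'' is ultimately needed; the paper simply does that reduction (Lemmas~\ref{kjr}, \ref{kjrd}) and skips the covering machinery entirely. If you want your version to stand on its own, you would need to supply those explicit reductions for the finitely many critical $G'$---at which point you have essentially reproduced the paper's argument.
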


Representations of tame quivers were
classified by Donovan and Freislich \cite{don1}
and independently by Nazarova \cite{naz}.

The following theorem is a special case
of the Krull--Schmidt theorem for
additive categories \cite[Chapter I,
Theorem 3.6]{bas} (it holds for
representations of a biquiver since
they form an additive category in which
all idempotents split).

\begin{theorem}\label{jhp}
Each representation of a biquiver is
isomorphic to a direct sum of
indecomposable representations. This
direct sum is uniquely determined, up
to permutations and isomorphisms of
direct summands, which means that if \[
\mathcal A_1\oplus\dots\oplus\mathcal
A_r\simeq \mathcal B_1\oplus\dots\oplus
\mathcal B_s,\] in which all $\mathcal
A_i$ and $\mathcal B_j$ are
indecomposable representations, then
$r=s$ and all $\mathcal A_i\simeq
\mathcal B_i$ after a suitable
renumbering of $\mathcal
A_1,\dots,\mathcal A_r$.
\end{theorem}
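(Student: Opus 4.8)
The plan is to deduce Theorem~\ref{jhp} from the Krull--Schmidt--Azumaya theorem for additive categories in the cited form, so the work consists of verifying its hypotheses for the category $\operatorname{Rep}(G)$ of representations of the biquiver $G$: that it is additive, that idempotent morphisms split, and that every nonzero representation is a finite direct sum of subrepresentations with local endomorphism rings. The additivity is routine: the zero representation is a zero object; $\operatorname{Hom}(\mathcal A,\mathcal B)$ is an abelian group under componentwise addition of the families $(\mathcal F_v)$, where one uses that semilinear maps are additive to see that a sum of morphisms still satisfies the commutativity relations \eqref{1.2aa}; composition is biadditive; and finite direct sums exist by the definition of $\oplus$ recalled in Section~\ref{kdtf}. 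Moreover $\operatorname{Hom}(\mathcal A,\mathcal B)$ is a finite-dimensional real vector space, being a subspace of $\bigoplus_v\operatorname{Hom}_{\mathbb C}(\mathcal A_v,\mathcal B_v)$; hence $R:=\operatorname{End}(\mathcal A)$ is a finite-dimensional $\mathbb R$-algebra for every representation $\mathcal A$. (It need not be a $\mathbb C$-algebra when $G$ has dashed arrows, since multiplying a whole endomorphism by a non-real scalar $\lambda$ breaks the commutativity at a dashed arrow $\alpha$: the semilinear map $\mathcal B_\alpha$ contributes $\bar\lambda$ on one side and $\lambda$ on the other. Only that $\mathbb R$ is a field is used below.)

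Next I would check that idempotents split. Given $e=e^2\in\operatorname{End}(\mathcal A)$, set $\mathcal B_v:=\operatorname{im}e_v$ and $\mathcal C_v:=\ker e_v=\operatorname{im}(1-e_v)$ for each vertex $v$. Since $e$ commutes with every structure map, each $\mathcal A_\alpha$ (linear or semilinear) carries $\mathcal B_u$ into $\mathcal B_v$ and $\mathcal C_u$ into $\mathcal C_v$, so the restrictions make $\mathcal B$ and $\mathcal C$ into subrepresentations with $\mathcal A=\mathcal B\oplus\mathcal C$, and $e$ factors as $\mathcal A\twoheadrightarrow\mathcal B\hookrightarrow\mathcal A$. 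Combined with the fact that $R$ is a finite-dimensional $\mathbb R$-algebra, this gives the usual dictionary: the Jacobson radical $J$ of $R$ is nilpotent, idempotents lift along $J$, and $R/J$ is semisimple, so $R$ has no idempotents besides $0$ and $1$ iff $R/J$ is a division ring iff $R$ is local; hence a nonzero $\mathcal A$ is indecomposable iff $\operatorname{End}(\mathcal A)$ is local.

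Existence of a finite indecomposable decomposition then follows by induction on $\dim\mathcal A_1+\dots+\dim\mathcal A_t$: if $\mathcal A$ is indecomposable we are done, and otherwise $\mathcal A\simeq\mathcal A'\oplus\mathcal A''$ with both summands nonzero, hence of strictly smaller total dimension, and we apply the inductive hypothesis. At this point $\operatorname{Rep}(G)$ is an additive category with split idempotents in which every object is a finite direct sum of objects with local endomorphism rings, so \cite[Chapter~I, Theorem~3.6]{bas} applies verbatim and yields both the existence and the uniqueness assertions of Theorem~\ref{jhp}. (If one prefers to argue directly, the uniqueness is the standard exchange argument: writing $\mathcal A_1\oplus\dots\oplus\mathcal A_r\simeq\mathcal B_1\oplus\dots\oplus\mathcal B_s$ and composing the inclusion of $\mathcal A_1$ with the projections and inclusions of the $\mathcal B_j$, locality of $\operatorname{End}(\mathcal A_1)$ forces some composite $\mathcal B_j\to\mathcal A_1\to\mathcal B_j$ to be an automorphism, whence $\mathcal A_1\simeq\mathcal B_j$ is a common direct summand that can be cancelled, and one induces on $r$.)

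I do not expect a genuine obstacle here; the statement is recorded only for completeness. The single point requiring a moment's attention is the remark that, in the presence of dashed arrows, $\operatorname{End}(\mathcal A)$ is a finite-dimensional algebra over $\mathbb R$ rather than over $\mathbb C$ — which is harmless, since all that the argument uses is finite-dimensionality over a field. Everything else is the verbatim transcription of the quiver-representation proof, the semilinearity entering only through the (equally verbatim) fact that semilinear maps are additive and restrict to subspaces.
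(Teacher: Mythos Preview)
Your proposal is correct and follows exactly the approach the paper indicates: the paper does not give a standalone proof but simply records, immediately before the statement, that Theorem~\ref{jhp} is a special case of the Krull--Schmidt theorem for additive categories \cite[Chapter~I, Theorem~3.6]{bas}, holding because representations of a biquiver form an additive category in which all idempotents split. You have supplied the verifications the paper leaves implicit (additivity, splitting of idempotents, locality of endomorphism rings of indecomposables via finite-dimensionality over $\mathbb R$), which is more detail than the paper gives but entirely in the same spirit.
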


\section{Matrix representations of
biquivers}

Let us recall some elementary facts about semilinear mappings.

We denote by $[v]_e$ the coordinate
vector of $v$ in a basis
$e_1,\dots,e_n$, and by $S_{e\to e'}$
the transition matrix from a basis
$e_1,\dots,e_n$ to a basis
$e'_1,\dots,e'_n$.  If $A=[a_{ij}]$
then $\bar A:=[\bar a_{ij}]$.

Let ${\mathcal A}: U\dashrightarrow V$  be
a semilinear mapping. We say that an
$m\times n$ matrix ${\mathcal A}_{fe}$ is
the \emph{matrix of $\mathcal A$} in bases
$e_1,\dots,e_n$ of $U$ and
$f_1,\dots,f_m$ of $V$ if
\begin{equation}\label{feo2}
[{\mathcal A}u]_f=\overline{{\mathcal A}_{fe}[u]_e}\qquad
\text{for all }u\in U.
\end{equation}
Therefore, the columns of ${\mathcal
A}_{fe}$ are $\overline{[{\mathcal
A}e_1]_f}, \dots, \overline{[{\mathcal
A}e_n]_f}$. We write ${\mathcal A}_{e}$
instead of ${\mathcal A}_{ee}$ if $U=V$.

If $e'_1,\dots,e'_n$ and
$f'_1,\dots,f'_m$ are other bases of
$U$ and $V$, then
\begin{equation}\label{swk3}
{\mathcal A}_{f'e'}=\bar S_{f\to f'}^{-1}{\mathcal A}_{fe}
S_{e\to e'}
\end{equation}
since the right hand matrix satisfies
\eqref{feo2} with $e',f'$ instead of
$e,f$:
\[
\overline{\bar S_{f\to f'}^{-1}{\mathcal A}_{fe}
S_{e\to e'}[v]_{e'}}=
S_{f\to f'}^{-1}\overline{{\mathcal A}_{fe}
[v]_{e}}=S_{f\to f'}^{-1}[{\mathcal A}
v]_f=[{\mathcal A}
v]_{f'}
\]

\begin{lemma}\label{kowq}
Let $U$, $V$, and $W$ be vector spaces
with bases $e_1,e_2,\dots,$
$f_1,f_2,\dots,$, and $g_1,g_2,\dots$.
\begin{itemize}
  \item[\rm (a)] The composition of
      a linear mapping ${\mathcal
      A}: U\to V$ and a semilinear
      mapping ${\mathcal B}:
      V\dashrightarrow W$ is the
      semilinear mapping with
      matrix
\begin{equation}\label{lif}
(\mathcal B \mathcal
      A)_{ge}=\mathcal B_{gf}
      \mathcal A_{fe}
\end{equation}

 \item[\rm (b)] The composition of
     a semilinear mapping
     ${\mathcal A}:
     U\dashrightarrow V$ and a
     linear mapping ${\mathcal B}:
     V\to W$ is the semilinear
     mapping with matrix
\begin{equation}\label{kif}
(\mathcal B \mathcal
A)_{ge}=\overline{\mathcal B}_{gf} \mathcal
A_{fe}
\end{equation}

\end{itemize}
\end{lemma}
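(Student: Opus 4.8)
The plan is to prove both parts of Lemma~\ref{kowq} by a direct computation using the defining equations \eqref{feo2} for the matrices of semilinear mappings and the corresponding (standard) defining equation $[\mathcal A u]_f = \mathcal A_{fe}[u]_e$ for the matrix of a linear mapping, together with Lemma-free elementary facts: complex conjugation is additive and multiplicative on matrices, and $\overline{\overline M}=M$. The heart of the matter is simply keeping track of where a conjugation bar is introduced: a semilinear map contributes a bar (by \eqref{feo2}), a linear map does not, and a bar already present gets pushed through any subsequent matrix by the multiplicativity of conjugation.

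For part~(a): let $u\in U$ be arbitrary and write $x=[u]_e$. Since $\mathcal A$ is linear, $[\mathcal A u]_f=\mathcal A_{fe}\,x$. Since $\mathcal B$ is semilinear, applying \eqref{feo2} to the vector $\mathcal A u\in V$ gives
\[
[\mathcal B(\mathcal A u)]_g=\overline{\mathcal B_{gf}\,[\mathcal A u]_f}=\overline{\mathcal B_{gf}\,\mathcal A_{fe}\,x}
=\overline{(\mathcal B_{gf}\,\mathcal A_{fe})\,x}.
\]
Comparing with the defining equation \eqref{feo2} for the semilinear composition $\mathcal B\mathcal A$, namely $[(\mathcal B\mathcal A)u]_g=\overline{(\mathcal B\mathcal A)_{ge}\,[u]_e}=\overline{(\mathcal B\mathcal A)_{ge}\,x}$, and noting that $N\mapsto \overline{Nx}$ determines $N$ (take $x$ running over the standard basis vectors), we conclude $(\mathcal B\mathcal A)_{ge}=\mathcal B_{gf}\,\mathcal A_{fe}$, which is \eqref{lif}. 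The only thing to check is that $\mathcal B\mathcal A$ is indeed semilinear, which is immediate: it is additive as a composite of additive maps, and $(\mathcal B\mathcal A)(\alpha u)=\mathcal B(\alpha\,\mathcal A u)=\bar\alpha\,\mathcal B(\mathcal A u)$.

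For part~(b): now $\mathcal A$ is semilinear and $\mathcal B$ is linear. With $x=[u]_e$ we have $[\mathcal A u]_f=\overline{\mathcal A_{fe}\,x}$ by \eqref{feo2}, and then, since $\mathcal B$ is linear,
\[
[\mathcal B(\mathcal A u)]_g=\mathcal B_{gf}\,[\mathcal A u]_f=\mathcal B_{gf}\,\overline{\mathcal A_{fe}\,x}
=\overline{\overline{\mathcal B_{gf}}\;\mathcal A_{fe}\,x}
=\overline{(\overline{\mathcal B}_{gf}\,\mathcal A_{fe})\,x},
\]
where the third equality uses $M\overline N=\overline{\overline M\,N}$. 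Here $\mathcal B\mathcal A$ is again semilinear ($(\mathcal B\mathcal A)(\alpha u)=\mathcal B(\bar\alpha\,\mathcal A u)=\bar\alpha\,\mathcal B(\mathcal A u)$), so comparing with $[(\mathcal B\mathcal A)u]_g=\overline{(\mathcal B\mathcal A)_{ge}\,x}$ and using the same injectivity remark yields $(\mathcal B\mathcal A)_{ge}=\overline{\mathcal B}_{gf}\,\mathcal A_{fe}$, which is \eqref{kif}.

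I do not expect a genuine obstacle here; the lemma is purely computational. The one subtlety worth stating explicitly — and the only place an error could creep in — is the bookkeeping of conjugation bars, i.e.\ remembering that the bar in \eqref{feo2} sits on the \emph{outside} of the product $\mathcal A_{fe}[u]_e$, so that in part~(b) the bar coming from $\mathcal A$ must be transported across $\mathcal B_{gf}$ (producing $\overline{\mathcal B}_{gf}$) rather than left in place. Once this is handled, both identities drop out, and the proof can be written in a few lines.
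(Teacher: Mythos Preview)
Your proof is correct and follows essentially the same route as the paper: both verify semilinearity of $\mathcal B\mathcal A$ and then compute $[(\mathcal B\mathcal A)u]_g$ directly from the defining relation~\eqref{feo2}, pushing the conjugation bar across $\mathcal B_{gf}$ in part~(b). The only difference is cosmetic---you spell out the uniqueness argument (letting $x$ run over basis vectors) that the paper leaves implicit.
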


\begin{proof}
The identity \eqref{lif} follows from
observing that ${\mathcal AB}$ is a
semilinear mapping and
\[
[({\mathcal B}{\mathcal A})u]_{ge}=
[{\mathcal B}({\mathcal A}u)]_{ge}=
\overline{{\mathcal B}_{gf}[{\mathcal
A}u]_{fe}}= \overline{({\mathcal
B}_{gf}{\mathcal A}_{fe})[u]_{e}}
\]
for each $u\in U$. The identity
\eqref{kif} follows from observing that
${\mathcal AB}$ is a semilinear mapping
and
\[
[({\mathcal B}{\mathcal A})u]_{ge}=
[{\mathcal B}({\mathcal A}u)]_{ge}=
{\mathcal B}_{gf}[{\mathcal
A}u]_{fe}= {\mathcal
B}_{gf}\overline{{\mathcal A}_{fe}[u]_{e}}=
\overline{(\overline{{\mathcal
B}}_{gf}{\mathcal A}_{fe})[u]_{e}}
\]
for each $u\in U$.
\end{proof}

Let $\mathcal A: V\dashrightarrow V$ be
a semilinear mapping; let ${\mathcal
A}_{e}$ and ${\mathcal A}_{e'}$ be its
matrices in bases $e_1,\dots,e_n$ and
$e'_1,\dots,e'_n$ of $V$. By
\eqref{swk3},
\begin{equation}\label{swk4}
{\mathcal A}_{e'}=\bar S_{e\to e'}^{-1}{\mathcal A}_{e}
S_{e\to e'}
\end{equation}
and so ${\mathcal A}_{e'}$ and ${\mathcal
A}_{e}$ are consimilar: recall that two
matrices $A$ and $B$ are
\emph{consimilar} if there exists a
nonsingular matrix $S$ such that $\bar
S^{-1}AS=B$; a canonical form of a square complex matrix under consimilarity is given in \cite[Theorem 4.6.12]{j-h}.

Each representation $\mathcal A$ of a
biquiver $G$ can be given by the set
$A$ of matrices ${A}_{\alpha }$ of its
mappings ${\mathcal A}_{\alpha }$ in fixed
bases of the spaces ${\mathcal
A}_1,\dots,{\mathcal A}_t$. Changing the
bases, we can reduce ${\mathcal A}_{\alpha
}$ by transformations
$S^{-1}_v{A}_{\alpha}{S}_u$ if $\alpha:
u\longrightarrow v$ and
$\bar{S}^{-1}_v{A}_{\alpha}{S}_u$ if
$\alpha: u\dashrightarrow v$, in which
$S_1,\dots,S_t$ are the transition
matrices, which reduces the problem of classifying
representations of $G$ up to isomorphism
to the problem of classifying
the sets $A$ up to these
transformations. This leads to the
following definition.

\begin{definition}\label{def2}
Let $G$ be a
      biquiver with vertices
      $1,\dots,t$.
\begin{itemize}
  \item A \emph{matrix
      representation $A$ of
      dimension $(d_1,\dots,d_t)$}
      of $G$ is given by assigning
      an ${d_v\times d_u}$ complex
      matrix ${A}_{\alpha}$ to each
      arrow
      $\alpha:u\longrightarrow v$
or $u\dashrightarrow v$.

  \item Two matrix representations
      $A$ and $B$ of $G$ are
      \emph{isomorphic} if there
      exist nonsingular matrices
      $S_1,\dots, S_t$ such that
\begin{equation}\label{ljt}
{B}_{\alpha}=
  \begin{cases}
    S^{-1}_v{A}_{\alpha}{S}_u &
\hbox{for every full arrow $\alpha:
u\longrightarrow v$,} \\
\bar{S}^{-1}_v{A}_{\alpha}{S}_u &
\hbox{for every dashed arrow $\alpha:
u\dashrightarrow v$.}
  \end{cases}
\end{equation}
\end{itemize}
\end{definition}

Each matrix representation $A$ of
dimension $d=(d_1,\dots,d_t)$ can be
identified with the representation
$\mathcal A$ from the Definition
\ref{def1}, whose vector spaces have
the form
\[
{\mathcal A}_v={\mathbb
C}\oplus\dots\oplus {\mathbb C}\quad(\text{$d_v$ summands})
\]
for all vertices $v$ and the linear or
semilinear mappings ${\mathcal A}_{\alpha
}$ are defined by the matrices
$A_{\alpha }$. A \emph{morphism} ${\mathcal
F}:{\mathcal A}\to{\mathcal B}$ of such
representations of dimensions $d$ and
$d'$ is given by a set of matrices
${F}_i\in\mathbb C^{d_i'\times d_i}$
such that
\begin{equation}\label{l,t}
{B}_{\alpha}{F}_u=
  \begin{cases}
    F_v{A}_{\alpha} & \hbox{for every arrow $\alpha:
u\longrightarrow v$,} \\
\overline{F}_v{A}_{\alpha} &
\hbox{for every arrow $\alpha:
u\dashrightarrow v$}
  \end{cases}
\end{equation}
(these equalities are obtained from
\eqref{1.2aa} in view of Lemma
\ref{kowq}.) In particular, if ${\mathcal
F}$ is isomorphism, then we can put
$S_v:=F^{-1}_v$ for all vertices $v$
and rewrite \eqref{l,t} in the form
\eqref{ljt}.

 By \eqref{swk4} and \eqref{ljt},
\begin{quote}
\emph{two matrix representations are
isomorphic if and only if they give the
same representation $\mathcal A$ but in
possible different bases.}
\end{quote}

Denote by $M(G)$ the set of matrix
representations of a biquiver $G$.

\section{Proof of Theorem \ref{te1}}
\label{oir}

For each biquiver $G$ and its vertex
$u$, we denote by $G^u$ the biquiver
obtained from $G$ by replacing all
arrows $u\longrightarrow v$ and
$v\longrightarrow u$ for each $v\ne u$
by $u\dashrightarrow v$ and
$v\dashrightarrow u$, and vice versa.
For example,
\newcommand{\examp}[7]
{\raisebox{8em}{$#7$}\text{{\begin{tikzpicture}[->,thick,auto,
node
distance=1.7cm,xscale=2.5,yscale=2.5]
  \node (0) {$u$};
  \node(1) [above of=0] {$1$};
  \node (2) [left of=0] {$2$};
  \node (3) [below of=0] {$3$};
  \node (4) [right of=0] {$4$};
 \path
(1)
     edge [dashed] node [left]
    {${\scriptstyle #1}$} (0)
(2)
     edge node [above] {${\scriptstyle #2}$} (0)
(0)
     edge [dashed] node [right] {${\scriptstyle #3}$} (3)
     edge node [below] {${\scriptstyle #4}$} (4)
     edge [in=23,out=67,loop,dashed] node
            {${\scriptstyle #6}$} (0)
     edge [in=203,out=247,loop] node
        {${\scriptstyle #5}$}(0)
;
\end{tikzpicture}
}}}
             %%%%%%%%%%%%%%%%
\newcommand{\exam}[7]
{\raisebox{8em}{$#7$}\text{{\begin{tikzpicture}[->,thick,auto,
node
distance=1.7cm,xscale=2.5,yscale=2.5]
  \node (0) {$u$};
  \node(1) [above of=0] {$1$};
  \node (2) [left of=0] {$2$};
  \node (3) [below of=0] {$3$};
  \node (4) [right of=0] {$4$};
 \path
(1)
     edge node [left] {${\scriptstyle #1}$} (0)
(2)
     edge [dashed] node [above] {${\scriptstyle #2}$} (0)
(0)
     edge node [right] {${\scriptstyle #3}$} (3)
     edge [dashed] node [below] {${\scriptstyle #4}$} (4)
     edge [in=23,out=67,loop,dashed] node
        {${\scriptstyle #6}$} (0)
     edge [in=203,out=247,loop] node
        {${\scriptstyle #5}$}(0)
;
\end{tikzpicture}
}}}
        %%%%%%%%%%%%%%%%%
\begin{equation}\label{hdd}
\begin{split}
\examp{\alpha_1}{\alpha_2}{\alpha_3}
{\alpha_4}{\alpha_5}{\alpha_6}{G:}
   \qquad
\exam{\alpha_1}{\alpha_2}{\alpha_3}
{\alpha_4}{\alpha_5}{\alpha_6}{G^u:}
\end{split}
\end{equation}
We say that $G^u$ is
obtained from $G$ by \emph{conjugation}
at the vertex $u$.
For each $A\in M(G)$, define $ A^u\in
M(G^u)$ as follows:
\begin{equation*}\label{kurw}
 A^u_{\alpha }:=
             \begin{cases}
A_{\alpha } & \hbox{if $\alpha$ does not start at $u$},
      \\
\bar{A}_{\alpha }
& \hbox{if $\alpha$ starts at $u$.}
             \end{cases}
\end{equation*}
We say that $A^u$ is obtained from $A$
by \emph{conjugation} at the vertex
$u$.

\begin{lemma}\label{kow2}
Let ${ A},{B}\in M(G)$ and let $u$ be
any vertex of $G$. Then ${A}\simeq{B}$
if and only if ${A}^u\simeq{B}^u$.
\end{lemma}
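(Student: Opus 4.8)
The plan is to exhibit an explicit correspondence between isomorphisms $A\simeq B$ in $M(G)$ and isomorphisms $A^u\simeq B^u$ in $M(G^u)$, and then to observe that conjugation at $u$ is an involution, so that the two directions of the equivalence reduce to one another.

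First I would recall that an isomorphism $A\simeq B$ is a tuple of nonsingular matrices $(S_1,\dots,S_t)$ satisfying \eqref{ljt}. Given such a tuple, I would define $(S_1',\dots,S_t')$ by $S_u':=\bar S_u$ and $S_v':=S_v$ for $v\ne u$, and claim that it witnesses $A^u\simeq B^u$. To verify this it suffices to check \eqref{ljt} (with $A^u,B^u,S',G^u$ in place of $A,B,S,G$) one arrow at a time, and there are essentially four cases for an arrow $\alpha$ of $G$: (i) $\alpha$ is not incident to $u$, where neither the matrices, nor the arrow type, nor the relevant $S$'s change; (ii) $\alpha$ starts at $u$ and ends at some $v\ne u$, where $A_\alpha,B_\alpha$ are replaced by $\bar A_\alpha,\bar B_\alpha$, the arrow type flips, and $S_u$ is replaced by $\bar S_u$; (iii) $\alpha$ ends at $u$ and starts at some $v\ne u$, where $A_\alpha,B_\alpha$ are unchanged, the arrow type flips, and $S_u$ is replaced by $\bar S_u$; (iv) $\alpha$ is a loop at $u$, where $A_\alpha,B_\alpha$ are replaced by their conjugates, the arrow type is unchanged, and $S_u$ is replaced by $\bar S_u$. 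In each case the identity required of $(A^u,B^u,S')$ either coincides with the identity satisfied by $(A,B,S)$ or is its complex conjugate; this is a short computation using only $\overline{XY}=\bar X\,\bar Y$, $\overline{X^{-1}}=\bar X^{-1}$, and $\overline{\bar X}=X$, applied to \eqref{ljt}.

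Next I would record that conjugation at $u$ is an involution: $(G^u)^u=G$, since flipping the type of the arrows incident to $u$ twice restores them (and loops at $u$ are left untouched throughout), and $(A^u)^u=A$, since $\overline{\bar A_\alpha}=A_\alpha$ for arrows $\alpha$ starting at $u$ while all other matrices are unchanged. Consequently, if $A^u\simeq B^u$, then applying the construction of the previous paragraph to the biquiver $G^u$ and the representations $A^u,B^u$ gives $(A^u)^u\simeq (B^u)^u$, i.e.\ $A\simeq B$; together with the forward implication this proves the lemma.

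I do not anticipate a real obstacle here: the argument is entirely formal once the correspondence $S\mapsto S'$ is guessed. The only point requiring care is the bookkeeping in the case division over arrows incident to $u$ — in particular, remembering that a loop at $u$ keeps its type in $G^u$ but still has its matrix conjugated in $A^u$ — together with keeping track of where each complex conjugation lands in \eqref{ljt}.
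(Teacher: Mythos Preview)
Your proposal is correct and follows essentially the same approach as the paper: both replace $S_u$ by $\bar S_u$ (leaving the other $S_v$ unchanged) and verify the isomorphism conditions arrow by arrow, the paper via a single ``universal'' biquiver containing every incidence type at $u$, and you via an equivalent abstract case split; your explicit use of the involution $(G^u)^u=G$, $(A^u)^u=A$ to obtain the converse is exactly what the paper's phrase ``it suffices to prove'' leaves implicit.
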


\begin{proof}
It suffices to prove that
\begin{equation}\label{ahd}
\parbox{22em}
{if $A \simeq B$ via $S_1,\dots,S_t$
(see \eqref{ljt}), then $A^u \simeq
B^u$ via $R_1,\dots,R_t$, in which
$R_v:=S_v$ if $v\ne u$ and $R_u:=\bar
S_u$.}
\end{equation}
Moreover, it suffices to prove
\eqref{ahd} for matrix representations
of the biquiver $G$ defined in
\eqref{hdd}, which contains all
possible arrows from the vertex $u$ and
to the vertex $u$.

Let us consider an arbitrary matrix
representation $A$ of $G$ and the
corresponding matrix representation
$A^u$ of $G^u$:
\[
\examp{A_1}{A_2}{A_3}
{A_4}{A_5}{A_6}{A:}
   \qquad
\exam{A_1}{A_2}{A_3}
{A_4}{A_5}{A_6}{A^u:}
\]

Let $B$ be any matrix representation of
$G$ that is isomorphic to $A$ via
$S_1,\dots S_4,S_u$. Then $B$ and the
corresponding matrix representation
$B^u$ of $G^u$ have the form:
\[
\examp{\bar S_u^{-1}A_1S_1}{S_u^{-1}A_2S_2}
{\bar S_3^{-1}A_3S_u}
{S_4^{-1}A_4S_u}{S_u^{-1}A_5S_u}{\bar S_u^{-1}A_6S_u}
{B:\!\!\!\!\!\!\!\!\!}
   \qquad
\exam{R_u^{-1}A_1S_1}{\bar R_u^{-1}A_2S_2}
{S_3^{-1}\bar A_3R_u}
{\bar S_4^{-1}\bar A_4R_u}
{R_u^{-1}\bar A_5R_u}{\bar R_u^{-1}\bar A_6R_u}
{B^u:\!\!\!\!\!\!\!\!\!}
\]
in which $R_u$ is defined by
\eqref{ahd}.

Therefore, $B^u$ is isomorphic to $A^u$
via $S_1,\dots S_4,R_u$, which proves
\eqref{ahd}.
\end{proof}

\begin{proof}[Proof of Theorem
\ref{te1}] Let $G$ be a connected
bigraph with $t$ vertices.

(a) Suppose first that $G$ is a tree.
Let us prove that by a sequence of
conjugations we can transform $G$ to
the quiver $Q(G)$ obtained from $G$ by
replacing each dashed arrow
$v\dashrightarrow w$ with the full
arrow $ v\longrightarrow w$.

Let $w$ be a pendant vertex of $G$
(i.e., a vertex of degree 1). Let
$\alpha$ be the arrow for which $w$ is
one of its vertices. Denote by
$G\setminus\alpha$ the biquiver
obtained from $G$ by deleting $w$ and
$\alpha $. Reasoning by induction on
the number of vertices, we assume that
$G\setminus\alpha$ can be transformed
to $Q(G\setminus\alpha)$ by a sequence
of conjugations. The same sequence of
conjugations  transforms $G$ to some
biquiver $G'$ in which only the arrow
that is obtained from $\alpha $ can be
dashed. If it is dashed, we make it
full by conjugation of $G'$ at the vertex
$w$ and obtain $Q(G)$. Theorem
\ref{te1} holds for $Q(G)$ by Gabriel's
theorem {\cite{gab}}. Lemma \ref{kow2}
ensures that Theorem \ref{te1} holds
for $G$ too.

Suppose now that $G$ is not a tree.
Then $G$ contains a cycle $C$ that up
to renumbering of vertices of $G$ has
the form
\begin{equation}\label{aay}
\begin{split}
C: \qquad\xymatrix{
{1}
\ar@{-}@/^2pc/[rrrr]^{\alpha _r}
\ar@{-}[r]^{\quad\alpha_1}&
2\ar@{-}[r]^{\alpha_2} &
{\ \dots\ }&{(r-1)}\ar@{-}[l]_{\alpha_{r-2}\ }
\ar@{-}[r]^{\alpha _{r-1}}&{r}\quad}
\end{split}
\end{equation}
 If $r>1$ and the
sequence of arrows $\alpha
_1,\dots,\alpha _{r-1}$ contains a
dashed arrow, then we take
the first dashed arrow $\alpha _{\ell}$ and make it
full by conjugation of $G$ at the
vertex $\ell+1$. Repeating this
procedure, we make all arrows $\alpha
_1,\dots,\alpha _{r-1}$ full.

For each
$n\times n$ matrix $M$, let us construct the
matrix representation $P(M)$ of $G$ by
assigning $I_n$ to each of the arrows
$\alpha _1,\dots,\alpha _{r-1}$ (if
$r>1$), $M$ to $\alpha _r$, and $0_n$
to the other arrows. It is easy to see
that $P(M)\simeq P(N)$ if and only if either
$\alpha _r$ is full and $M$ is similar
to $N$, or $\alpha _r$ is dashed and
$M$ is consimilar to $N$ (see \eqref{swk4}). The Jordan
canonical form and a canonical form under consimilarity  \cite[Theorem 4.6.12]{j-h}
ensure that $G$ is of infinite type.
Since $G$ contains a cycle, it cannot
be obtained by directing edges in one
of the Dynkin diagrams \eqref{dfe}, and
so $q_G$ is not positive definite by
Gabriel's theorem {\cite{gab}}.

(b) If $G$ is of finite type, then $G$
is a tree. By the part (a) of the
proof, $G$ can be transformed to the
quiver $Q(G)$ by a sequence of
conjugations. By Lemma \ref{kow2}, this
sequence of conjugations transforms all
indecomposable representations of $G$
to all indecomposable representations
of $Q(G)$, and nonisomorphic
representations are transformed to
nonisomorphic representations. This
proves (b) for $G$ since (b) holds for
$Q(G)$.
\end{proof}

\section{Proof of Theorem \ref{te2}}\label{kur1}

\begin{lemma}\label{kjr}
The problem of classifying
representations of each of the
biquivers
\begin{equation}\label{fuv}
\begin{split}
\text{
\begin{tikzpicture}[->,thick,auto,node
distance=1.5cm,xscale=1.7,yscale=3]
  \node (1) {$1$};
  \node(2) [right of=1] {$2$};
\node (0)[left of=1]
{$G_1:$\hspace*{2cm}};
            %%%%%%%%%%%
\path
    (1)
edge [dashed,loop left] node
          {${\alpha _1}$} (1)
          edge node [above]
          {${\alpha }$} (2);
\end{tikzpicture}\hspace*{1cm}
\begin{tikzpicture}[->,thick,auto,node
distance=1.5cm,xscale=1.7,yscale=3]
  \node (1) {$1$};
  \node(2) [right of=1] {$2$};
\node (0)[left of=1]
{$G_2:$\hspace*{2cm}};
            %%%%%%%%%%%
\path
    (1)
edge [dashed,loop left] node
          {${\alpha _1}$} (1)
(2) edge node [above]
          {${\alpha }$} (1);
\end{tikzpicture}
}
\end{split}
\end{equation}
contains the problem of classifying
representations of the biquiver
\begin{equation}\label{mos}
\begin{split}\text{
\begin{tikzpicture}[->,thick,auto,node
distance=2.5cm,xscale=1.7,yscale=3]
  \node (1) {$1$};
  \node (0)[left of=1]
{$G_3:$};
            %%%%%%%%%%%
\path
    (1)
edge [loop left] node
          {${\alpha _1}$} (1)
edge [dashed,loop right] node
          {${\alpha _2}$} (1)
          ;
\end{tikzpicture}
}\end{split}
\end{equation}
\end{lemma}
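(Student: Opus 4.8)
The plan is to exhibit, for each of $G_1$ and $G_2$, a reduction that sends every representation of $G_3$ to a representation of $G_1$ (resp. $G_2$) and that is a representation embedding: it carries indecomposables to indecomposables, is injective on isomorphism classes, and reflects isomorphisms. Recall that a representation of $G_3$ is a triple $(V,A,B)$ with $A\colon V\to V$ linear and $B\colon V\dashrightarrow V$ semilinear, and that by \eqref{swk4} and \eqref{ljt} two such are isomorphic if and only if $A'=S^{-1}AS$ and $B'=\bar S^{-1}BS$ for some nonsingular $S$; in matrix terms this is the problem of classifying pairs $(A,B)$ of square complex matrices of the same size up to $(A,B)\mapsto(S^{-1}AS,\bar S^{-1}BS)$.

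I would first reformulate the two target problems. For $G_1$, if the matrix $P$ of the arrow $1\to 2$ is surjective, then --- since an isomorphism is free to choose $S_2$ as long as $S_2P'=PS_1$, and two surjections with the same kernel differ by an automorphism of the target --- a representation $(M,P)$ of $G_1$ is, up to isomorphism, the same datum as a pair $(M,W)$, where $M$ is a semilinear operator on $V_1$ and $W=\ker P$ is a subspace of $V_1$, two such pairs being equivalent under the action of $\mathrm{GL}(V_1)$. Dually, for $G_2$, if the matrix $P$ of the arrow $2\to 1$ is injective, then $(M,P)$ up to isomorphism is the pair (semilinear operator $M$ on $V_1$, subspace $W=\operatorname{im}P$ of $V_1$) up to $\mathrm{GL}(V_1)$. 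Realizing the arrow as the projection $V_1\to V_1/W$ in the first case and as the inclusion $W\hookrightarrow V_1$ in the second, it therefore suffices to embed the classification problem for $G_3$ into the classification problem for pairs (semilinear operator, subspace) on a complex vector space, up to base change.

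For this embedding I would use a standard matrix-problem construction. Given a matrix representation $(A,B)$ of $G_3$ of size $n$, set $V=U\oplus U\oplus U$ with $U=\mathbb C^n$, take for $M$ a semilinear operator whose block matrix is assembled from $B$, a nilpotent marker block $I$, and zeros, and take for $W$ the column space of a block matrix assembled from $I$, $A$, and zeros, with the blocks arranged so that the identity markers rigidify base changes. One then checks: if $S\in\mathrm{GL}(V)$ carries the pair $(M',W')$ attached to $(A',B')$ to the pair attached to $(A,B)$ --- that is, $\bar S^{-1}MS=M'$ and $SW'=W$ --- then $S$ is forced to be block upper triangular with all three diagonal blocks equal to a single nonsingular $n\times n$ matrix $S_0$; substituting this into $\bar S^{-1}MS=M'$ yields $B'=\bar S_0^{-1}BS_0$ and into $SW'=W$ yields $A'=S_0^{-1}AS_0$, while conversely $S=\operatorname{diag}(S_0,S_0,S_0)$ realizes any such isomorphism. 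The same linear algebra shows that $\operatorname{End}$ of the image representation is isomorphic to $\operatorname{End}(A,B)$, so the image is indecomposable exactly when $(A,B)$ is; transposed block patterns give the analogous embedding into $G_2$, completing the proof modulo the verifications.

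The step I expect to be the main obstacle is precisely this rigidity verification: proving that the identity markers force $S$ to be block upper triangular with a single repeated diagonal block. In the quiver setting one has the luxury of several arrows each carrying a marker identity map; here there is only one loop and one arrow, so the argument must lean on the consimilarity-rigidity of a nilpotent Jordan-type block placed inside the semilinear loop $M$ (so that $\bar S^{-1}MS=M'$ by itself already pins down several blocks of $S$, via the structure of $\operatorname{End}$ of a semilinear nilpotent operator, cf.\ \eqref{swk4}) together with the single marker coming from the subspace. Designing a block pattern that is simultaneously rigid in this sense and faithful enough to reconstruct both $A$ and $B$ is the delicate point; once such a pattern is fixed, the remaining steps are routine block computations using Lemma \ref{kowq}, \eqref{swk4}, and \eqref{ljt}.
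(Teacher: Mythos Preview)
Your outline is headed in the right direction---embedding $G_3$ into $G_1$ (resp.\ $G_2$) via a block-matrix construction with identity markers is exactly what the paper does---but you have made it harder than necessary and left the decisive step as an unresolved obstacle, with one of your claimed intermediate conclusions likely wrong.

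The paper uses only a $2\times 2$ block structure, not $3\times 3$. For $G_1$ it takes $V_1=\mathbb C^{2n}$, $V_2=\mathbb C^n$, with
\[
M=\begin{bmatrix}0&P\\I&Q\end{bmatrix},\qquad N=\begin{bmatrix}0&I\end{bmatrix},
\]
so that \emph{both} the linear datum $P$ and the semilinear datum $Q$ sit inside the single semilinear loop matrix $M$, while the arrow $N$ is a fixed surjection. The rigidity check is then a four-line block computation: $NS=RN$ forces $S$ block upper triangular with $(2,2)$-block $R$; then $MS=\bar S M'$ forces the $(1,2)$-block of $S$ to vanish and the $(1,1)$-block to equal $\bar R$, and reading off the remaining entries gives exactly $P'=R^{-1}PR$ and $Q'=\bar R^{-1}QR$. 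For $G_2$ one takes $N=\left[\begin{smallmatrix}I\\0\end{smallmatrix}\right]$ and argues the same way. No separate ``design a rigid pattern'' step survives.

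Your specific claim about the shape of $S$ is where your plan would break. For a semilinear nilpotent marker, the relation $MS=\bar S M'$ forces \emph{consecutive} diagonal blocks of $S$ to be complex conjugates of one another (this is exactly why the paper gets $S_1=\bar R$, not $S_1=R$). So in a $3\times 3$ scheme you should expect diagonal blocks $S_0,\bar S_0,S_0$, not $S_0,S_0,S_0$; whether the subspace condition then yields $A'=S_0^{-1}AS_0$ depends on which two blocks $A$ sits between, and the placement of $B$ in $M$ must be coordinated with this. That is repairable, but it is precisely the obstacle you flag as open, and the paper avoids it entirely by not splitting $A$ and $B$ between $M$ and $W$: putting both into $M$ lets a single conjugate pair $(\bar R,R)$ on the diagonal produce one similarity and one consimilarity in one stroke.
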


\begin{proof}
The problem of classifying
representations of the biquivers
\eqref{fuv} is the problem of
classifying matrix pairs up to
transformations
\begin{align}\label{jre}
(M,N)&\mapsto (\bar S^{-1}MS,R^{-1}NS),\\
(M,N)&\mapsto (\bar S^{-1}MS,S^{-1}NR),
\label{jre1}
\end{align}
respectively.

Let us consider $G_1$. Let
\begin{equation*}\label{gec}
M:=\begin{bmatrix}
     0 & P \\
     I & Q
   \end{bmatrix},\qquad
M':=\begin{bmatrix}
     0 & P' \\
     I & Q'
   \end{bmatrix},\qquad
 N:=\begin{bmatrix}
     0 & I
   \end{bmatrix},
\end{equation*}
in which all blocks are $n$-by-$n$. Let
$(M,N)$ be reduced to $(M',N)$ by
transformations \eqref{jre}; i.e.,
there exist nonsingular $S$ and $R$
such that
\begin{equation}\label{fsy}
MS=\bar SM',\qquad NS=RN.
\end{equation}
By the second equality in \eqref{fsy},
$S$ has the form
\[
S=\begin{bmatrix}
     S_1 & S_2 \\
     0 & R
   \end{bmatrix}.
\]
Equating the 1,1 blocks in the first
equality in \eqref{fsy} gives $S_2=0$;
equating the 2,1 blocks gives
$S_1=\bar R$; equating the 1,2 and
2,2 blocks gives
\begin{equation*}\label{tvs}
PR=RP',\qquad  QR=\bar RQ'.
\end{equation*}
Therefore, $(M,N)$ and $(M',N)$
define isomorphic representations of
$G_1$ if and only if $(P,Q)$ and
$(P',Q')$ define isomorphic
representations of \eqref{mos}, and so
the problem of classifying
representations of $ G_1$ contains the
problem of classifying representations
of \eqref{mos}.

Let us consider $G_2$. Taking
$N:=\left[\begin{smallmatrix}I\\0
\end{smallmatrix}\right]$ and reasoning
as for $G_1$, we prove that
if $(M,N)$ is reduced to $(M',N)$
by transformations \eqref{jre1}; i.e.,
there exist nonsingular $S$ and $R$
such that $MS=\bar SM'$ and
$NR=SN$, then $S$ is upper block
triangular and so $(P,Q)$ and $(P',Q')$
define isomorphic representations of
\eqref{mos}.
\end{proof}

\begin{lemma}\label{kjrd}
The problems of classifying
representations of the
biquiver $G_3$ defined in
\eqref{mos} and the
biquiver
\begin{equation*}\label{mos1}
\begin{split}\text{
\begin{tikzpicture}[->,thick,auto,node
distance=2.5cm,xscale=1.7,yscale=3]
  \node (1) {$1$};
  \node (0)[left of=1]
{$G_4:$};
            %%%%%%%%%%%
\path
    (1)
edge [dashed,loop left] node
          {${\alpha _1}$} (1)
edge [dashed,loop right] node
          {${\alpha _2}$} (1)
          ;
\end{tikzpicture}
}\end{split}
\end{equation*}
contain the problem of classifying
matrix pairs up to similarity.
\end{lemma}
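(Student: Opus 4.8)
The plan is to construct, for every pair $(A,B)$ of $n\times n$ complex matrices, a representation $\Phi(A,B)$ of $G_3$ and a representation $\Psi(A,B)$ of $G_4$, both of dimension $2n$, such that $\Phi(A,B)\simeq\Phi(A',B')$ (respectively $\Psi(A,B)\simeq\Psi(A',B')$) if and only if $(A,B)$ and $(A',B')$ are simultaneously similar, i.e.\ $A'=T^{-1}AT$, $B'=T^{-1}BT$ for some invertible $T$. Since $\Phi$ and $\Psi$ also send $(A_1\oplus A_2,\,B_1\oplus B_2)$ to a representation isomorphic to $\Phi(A_1,B_1)\oplus\Phi(A_2,B_2)$ (resp.\ $\Psi(A_1,B_1)\oplus\Psi(A_2,B_2)$), this reduces the classification of matrix pairs up to similarity to the classification of representations of $G_3$ and of $G_4$, which is the assertion.

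Both constructions use one rigid dashed loop, carried by the matrix $N:=\left[\begin{smallmatrix}0&I_n\\0&0\end{smallmatrix}\right]$. For $G_3$, whose full loop is $\alpha_1$ and dashed loop $\alpha_2$, I would take $\Phi(A,B)$ to assign $L:=\left[\begin{smallmatrix}A&0\\0&\bar B\end{smallmatrix}\right]$ to $\alpha_1$ and $N$ to $\alpha_2$; for $G_4$, with two dashed loops, $\Psi(A,B)$ assigns $M:=\left[\begin{smallmatrix}0&\bar B\\A&0\end{smallmatrix}\right]$ to one loop and $N$ to the other. The first step is to determine the ``$N$-stabilizer'': by \eqref{ljt}, an isomorphism between two such representations is an invertible $2n\times2n$ matrix $S$ satisfying $\bar SN=NS$ (preservation of the loop $N$) together with one further matrix equation; a short block computation shows that $\bar SN=NS$ forces $S=\left[\begin{smallmatrix}P&Q\\0&\bar P\end{smallmatrix}\right]$ with $P$ invertible and $Q$ arbitrary. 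The second step is to substitute this $S$ into the remaining equation, which is $SL'=LS$ for $G_3$ and $\bar SM'=MS$ for $G_4$. In both cases two of the four resulting block equations read $PA'=AP$ and $\bar P\bar B'=\bar B\bar P$ (the latter being the complex conjugate of $PB'=BP$), while the other two only constrain $Q$ and hold as soon as $Q=0$. Hence an isomorphism yields $A'=P^{-1}AP$ and $B'=P^{-1}BP$, i.e.\ a simultaneous similarity of $(A,B)$ and $(A',B')$; conversely, given a simultaneous similarity $T$ one checks directly that $S:=\left[\begin{smallmatrix}T&0\\0&\bar T\end{smallmatrix}\right]$ is an isomorphism of the two representations.

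The one place where $G_4$ differs essentially from $G_3$, and the part I expect to need care, is the bookkeeping of complex conjugations. A dashed loop transforms by $\bar S^{-1}(\,\cdot\,)S$ rather than $S^{-1}(\,\cdot\,)S$, so carrying $A$ and $B$ \emph{diagonally} on a dashed loop would produce the relation of simultaneous \emph{consimilarity} $(A,B)\mapsto(\bar T^{-1}AT,\bar T^{-1}BT)$ instead of similarity. The off-diagonal placement $M=\left[\begin{smallmatrix}0&\bar B\\A&0\end{smallmatrix}\right]$ is engineered so that the extra conjugation contributed by the off-diagonal entry, combined with the conjugate $\bar P$ forced in the lower-right corner of the $N$-stabilizer, cancels and leaves honest similarity; the twist $B\mapsto\bar B$ does the same job for the linear loop $L$ of $G_3$. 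With the positions and the conjugates chosen as above, the rest is the routine verification of the four $n\times n$ block equations in each case, together with the observation that invertibility of $S$ forces invertibility of its diagonal block $P$, so that $P$ is a genuine conjugating matrix.
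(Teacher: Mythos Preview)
Your argument is correct and in fact more direct than the paper's. The paper proceeds in two stages. For $G_3$ it places the rigid nilpotent $\left[\begin{smallmatrix}0&I\\0&0\end{smallmatrix}\right]$ on the \emph{full} loop, so the similarity stabilizer is $\left[\begin{smallmatrix}S_1&S_2\\0&S_1\end{smallmatrix}\right]$ with equal diagonal blocks; the dashed loop (carrying $\diagg(P,Q)$) then only yields simultaneous \emph{consimilarity} $(P,Q)\mapsto(\bar S_1^{-1}PS_1,\bar S_1^{-1}QS_1)$, i.e.\ a reduction of $G_3$ to $G_4$ rather than to similarity. The paper then treats $G_4$ with a $4n$-dimensional representation: one dashed loop carries $J_4(0)\otimes I_n$, whose consimilarity stabilizer is upper triangular with diagonal $S_1,\bar S_1,S_1,\bar S_1$, and suitably placed subdiagonal blocks $P,Q$ on the second dashed loop then transform by honest similarity. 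Composing the two steps embeds similarity pairs into $G_3$ only in dimension $8n$.

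Your idea is to put the rigid nilpotent on a \emph{dashed} loop from the start. The consimilarity stabilizer $\bar SN=NS$ then has the form $\left[\begin{smallmatrix}P&Q\\0&\bar P\end{smallmatrix}\right]$ with a conjugate in the lower-right block, and this built-in conjugation is exactly what is needed to cancel the one coming from the second loop: the diagonal placement $\diagg(A,\bar B)$ on the full loop of $G_3$, and the off-diagonal placement $\left[\begin{smallmatrix}0&\bar B\\A&0\end{smallmatrix}\right]$ on the second dashed loop of $G_4$, both yield the relations $PA'=AP$ and $PB'=BP$ directly. The outcome is a one-step, $2n$-dimensional embedding for each of $G_3$ and $G_4$. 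The trade-off is that your construction requires the twist $B\mapsto\bar B$ in the defining matrices, so the embedding is not $\mathbb C$-linear in the parameters; the paper's larger embeddings use $P,Q$ untwisted. For the purpose of the lemma (wildness), this makes no difference.
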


\begin{proof}
The problems of classifying
representations of the biquivers $G_3$
and $G_4$ are the problems of classifying
matrix pairs up to transformations
\begin{align}\label{jre3}
(M,N)&\mapsto (S^{-1}MS,\bar S^{-1}NS),\\
(M,N)&\mapsto (\bar S^{-1}MS,\bar S^{-1}NS),
\label{jre4}
\end{align}
respectively.

Let us consider $G_3$. Let
\begin{equation*}\label{jgec}
M:=\begin{bmatrix}
     0 & I \\
     0 & 0
   \end{bmatrix},\qquad
N:=\begin{bmatrix}
     P & 0 \\
     0 & Q
   \end{bmatrix},\qquad
N':=\begin{bmatrix}
     P' & 0 \\
     0 & Q'
   \end{bmatrix},
   \end{equation*}
in which all blocks are $n$-by-$n$. Let
$(M,N)$ be reduced to $(M,N')$ by
transformations \eqref{jre3}; i.e.,
there exists a nonsingular $S$ such
that
\begin{equation}\label{fsyw}
MS=SM,\qquad NS=\bar S N'.
\end{equation}
By the first equality in \eqref{fsyw},
$S$ has the form
\[
S=\begin{bmatrix}
     S_1 & S_2 \\
     0 & S_1
   \end{bmatrix}.
\]
Equating the 1,1 and 2,2 blocks in
the second equality in \eqref{fsyw}
gives
\begin{equation*}\label{tvsd}
\bar S_1^{-1}PS_1=P',
\qquad \bar S_1^{-1}QS_1=Q'.
\end{equation*}
Therefore, $(M,N)$ and $(M,N')$ define
isomorphic representations of $G_3$ if
and only if $(P,Q)$ and $(P',Q')$
define isomorphic representations of
$G_4$, and so the problem of
classifying representations of $ G_3$
contains the problem of classifying
representations of $G_4$.

Let us consider $G_4$. Let
\begin{equation*}\label{jgj}
M:=\begin{bmatrix}
     0 & I&0&0 \\
     0 & 0&I&0\\ 0 & 0&0&I\\ 0 & 0&0&0
   \end{bmatrix},\ \
N:=\begin{bmatrix}
 0 & 0&0&0 \\
     P & 0&0&0\\ 0 & 0&0&0\\ 0 & 0&Q&0
   \end{bmatrix},\ \
N':=\begin{bmatrix}
 0 & 0&0&0 \\
     P' & 0&0&0\\ 0 & 0&0&0\\ 0 & 0&Q'&0
   \end{bmatrix}\end{equation*}
in which all blocks are $n$-by-$n$. Let
$(M,N)$ be reduced to $(M,N')$ by
transformations \eqref{jre4}; i.e.,
there exists a nonsingular $S$ such
that
\begin{equation}\label{fstw}
MS=\bar SM,\qquad NS=\bar S N'.
\end{equation}
By the first equality in \eqref{fstw},
$S$ has the form
\[
S=\begin{bmatrix}
     S_1 & S_2&S_3&S_4 \\
     0 &\bar S_1 &\bar S_2&\bar S_3\\
0&0&S_1 & S_2 \\0& 0&0&\bar S_1 \\
\end{bmatrix}.
\]
Equating the 2,1 and 4,3 blocks in
the second equality in \eqref{fstw}
gives
\[
S_1^{-1}PS_1=P',
\qquad S_1^{-1}QS_1=Q'.
\]
Therefore, $(M,N)$ and $(M,N')$ define
isomorphic representations of $G_4$ if
and only if $(P,Q)$ and $(P',Q')$ are
similar, and so the problem of
classifying representations of $ G_4$
is wild.
\end{proof}

\begin{proof}[Proof of Theorem
\ref{te2}]

 (a) Suppose first that $G$ is a tree.
Reasoning as in the proof of Theorem
\ref{te1}, we transform $G$ to the
quiver $Q(G)$ by a sequence of
conjugations. Theorem \ref{te2} holds
for $Q(G)$ by the
Donovan--Freislich--Nazarova theorem
{\cite{don1,naz}}. Lemma \ref{kow2}
ensures that Theorem \ref{te2} holds
for $G$ too.

Suppose now that $G$ is not a tree.
Then $G$ contains a cycle $C$ that up
to renumbering of vertices of $G$ has
the form \eqref{aay} in which $r\ge 1$
and each edge is a full or dashed
arrow.

If $G=C$, then $G$ is of tame type; all
its representations were classified in
\cite{debora1}.

Let us suppose that $G\ne C$ and prove that
$G$ is of wild type.  The biquiver $G$
contains a biquiver $C'$ obtained by
adjoining to $C$ an edge $\alpha :u
\to v$ or $v\to u$, in which $u\in\{1,\dots,r\}$,
we suppose that $u=1$. If $C'$ is of
wild type, then $G$ is of wild type
too: we can identify all
representations of $C'$ with those
representations of $G$, in which the
vertices outside of $C'$ are assigned
by $0$; two representations of $C'$ are
isomorphic if and only if the
corresponding representations of $G$
are isomorphic. Further we suppose that
$G=C'$.

Reasoning as in the proof of Theorem
\ref{te1}(a), we can transform the subbiquiver
\[
\xymatrix{
{2}\ar@{-}[r]^{\alpha_2}&
{3}\ar@{-}[r]^{\alpha_3}&
\ \dots\ \ar@{-}[r]^{\alpha_{r-1}}&r
\ar@{-}[r]^{\alpha_{r}}&1}
\]
of $G$ to a quiver by a sequence of
conjugations in some of the vertices
$3,4,\dots,1$. Thus, we can suppose
that the arrows $\alpha _2,\dots,\alpha
_r$ of $G$ are full arrows.

Suppose first that $v$ is not a vertex
of $C$. If  $\alpha :u \lin v$ is
dashed, we make it full by conjugation
at $v$. If $\alpha _1$ is a full arrow,
then $G$ is a quiver of wild type by
the Donovan--Freislich--Nazarova
theorem. Thus, we can suppose that
$\alpha _1$ is a dashed arrow. Let
$\ell\in\{1,2\}$ be such that $\alpha $
in $G$ has the same direction as
$\alpha_1$ in $G_{\ell}$ defined
in \eqref{fuv}. The biquiver $G$ is of
wild type since $G_{\ell}$ is of wild
type and each matrix representation $A$
of $G_{\ell}$ can be identified with
the matrix representation of $G$
obtained from $A$ by assigning the
identity matrix to $\alpha
_2,\dots,\alpha _r$; two
representations of $G_{\ell}$ are
isomorphic if and only if the
corresponding representations of $G$
are isomorphic.

Suppose now that $v$ is a vertex of
$C$. If $\alpha $ and $\alpha _1$ are
full arrows, then $G$ is a quiver of
wild type by the
Donovan--Freislich--Nazarova theorem.
Let $\alpha$ or $\alpha _1$ be a dashed
arrow. Denote by $G'$ the biquiver
obtained from $G$ by deleting its
arrows $\alpha _2,\dots,\alpha _r$ and
its vertices $2,3,\dots,r-1$, and by
identifying the vertices $1$
and $r$. By Lemma \ref{kjrd}, $G'$ is
of wild type. Hence, $G$ is of wild
type too since each matrix
representation $A$ of $G'$ can be
identified with the matrix
representation of $G$ obtained from $A$
by assigning the identity matrix to
$\alpha _2,\dots,\alpha _r$; two
representations of $G'$ are isomorphic
if and only if the corresponding
representations of $G$ are isomorphic.

(b) Let $G$ be tame. Then $G$ is a tree or cycle. If $G$ is a tree, then it can be transformed to the
quiver $Q(G)$ by a sequence of
conjugations. By Lemma \ref{kow2}, this
sequence of conjugations transforms all
indecomposable representations of $G$
to all indecomposable representations
of $Q(G)$, and nonisomorphic
representations are transformed to
nonisomorphic representations. This
proves (b) for $G$ since (b) holds for quivers by the
Donovan--Freislich--Nazarova theorem. If $G$ is a cycle then (b) follows from the classification of its representations given in \cite{debora1}.
\end{proof}

\bibliographystyle{amsalpha}

\end{document}